\newtheorem{theorem}{Theorem}
\newtheorem{proposition}[theorem]{Proposition}
\newtheorem{corollary}[theorem]{Corollary}
\newtheorem*{claim*}{Claim}
\newfont\bbf{msbm10 at 12pt}
\def\eps{\varepsilon}
\def\RR{{\mathbb R}}
\def\C{{\mathbb C}}
\def\N{{\mathbb N}}
\def\B{{\mathcal B}}
\def\B{{\mathcal B}}
\def\le{\leqslant}
\def\ge{\geqslant}
\def\1{ {\hbox{{\it 1}} \!\! I} }
\begin{document}

\title[A Besicovitch-Morse function preserving the Lebesgue measure]
{A Besicovitch-Morse function preserving the Lebesgue measure}
\author{Jozef Bobok}

\author{Serge Troubetzkoy}

\address{Department of Mathematics of FCE\\Czech Technical University
in Prague\\
Th\'akurova 7, 166 29 Prague 6, Czech Republic}
\email{jozef.bobok@cvut.cz}

\address{Aix Marseille Univ, CNRS, Centrale Marseille, I2M, Marseille,
France\linebreak
postal address: I2M, Luminy, Case 907, F-13288 Marseille Cedex 9,
France}
\email{serge.troubetzkoy@univ-amu.fr}
\urladdr{www.i2m.univ-amu.fr/perso/serge.troubetzkoy/}  \date{}

\thanks{We thank the A*MIDEX project (ANR-11-IDEX-0001-02), funded
itself by the ``Investissements d'avenir'' program of the French
Government, managed by the French National Research Agency (ANR).
 The first  author was supported by the European Regional Development
 Fund, project No.~CZ 02.1.01/0.0/0.0/16\_019/0000778.}

\subjclass[2000]{37E05, 37B40, 46B25, 46.3}
\keywords{nowhere differentiable function, topological entropy}

\begin{abstract}
We continue the investigation of which non-differentiable maps can occur in
the framework of ergodic theory started in \cite{Bo91}.
We construct a Besicovitch-Morse function map which
preserves the Lebesgue measure.
We also show that the set of
Besicovitch functions is of first category in the set of continuous
functions which preserve the Lebesgue measure.
\end{abstract}
\maketitle
\vspace*{-0.8cm}
\section{Introduction}
In 1925 Besicovitch constructed a continuous function, $f: [0,1] \to
[0,1]$,  for which unilateral derivatives, finite
or infinite, do not
exists at any point \cite{Be25}.  A few years later, Pepper gave a
more geometric proof of the
same result \cite{Pe28}.
Saks has shown that such functions form a set of first category in the
space of all continuous
functions \cite{Sa32}. After this, Morse constructed a continuous
function with a stronger conclusion \cite{Mo38}, not only do
unilateral derivates not exist, but additionally
$$\max\{\vert D^+f(t)\vert,\vert D_+f(t)\vert\}=\max\{\vert
D^-f(t)\vert,\vert D_-f(t)\vert\}=\infty,~t\in [0,1].$$
See \cite{JP15} for a more detailed historical development.

We are interested in whether such non-differentiable maps can occur in
the framework of ergodic theory, more precisely  whether such nowhere
differentiable
functions can exist for a continuous map of $[0,1]$  which preserves
the Lebesgue measure.
Our main result is the existence of a Besicovitch-Morse function in
the space of continuous functions
preserving the Lebesgue measure (Theorem \ref{t:3}), improving
an earlier result of Bobok who showed the existence of a Besicovitch
function in this space \cite{Bo91}.
Furthermore, in analogy to Saks' classical theorem \cite{Sa32},  we
show that the set of
Besicovitch functions is of first category in the set of continuous
functions which preserve the
Lebesgue measure (Corollary \ref{cor:1}). Our construction of the
Besicovitch-Morse function is inspired by
Pepper's construction.

\section{Nowhere differentiable maps in $C(\lambda)$}

Let $I := [0,1]$.  Let  $\lambda$ denote the Lebesgue measure on $I$
and $\B$ the Borel sets in $I$.
Let $C(\lambda)$ consist of all continuous $\lambda$-preserving
functions from $I$ onto $I$, i.e.,
$$ C(\lambda)=\{f\colon~I\to I\colon~\forall
A\in\B,~\lambda(A)=\lambda(f^{-1}(A))\}. $$

We define the upper, lower, left and right \textit{Dini derivatives}
of $f$ at $t$:
\begin{align*}
D ^{+}f(t)& :=  \limsup_{\scriptstyle x\to t^{+} \atop \scriptstyle
x\in I}{\frac {f(x)-f(t)}{x-t}} \qquad
D _{+}f(t) :=  \liminf_{\scriptstyle x\to t^{+} \atop \scriptstyle
x\in I}{\frac {f(x)-f(t)}{x-t}}\\
D ^{-}f(t)& :=  \limsup_{\scriptstyle x\to t^{-} \atop \scriptstyle
x\in I}{\frac {f(x)-f(t)}{x-t}} \qquad
D _{-}f(t) :=  \liminf_{\scriptstyle x\to t^{-} \atop \scriptstyle
x\in I}{\frac {f(x)-f(t)}{x-t}}.
\end{align*}

We say that a finite one sided derivative exists at $t$ if $D^+f(t) =
D_+f(t) \in \RR$ or $D^-f(t) = D_-f(t) \in \RR$,
and that a finite or infinite one sided derivative exists at $t$ if
$D^+f(t) = D_+f(t) \in \RR \cup \{\pm \infty \}$ or $D^-f(t) = D_-f(t)
\in \RR \cup \{ \pm \infty\}$.
We introduce the following classes of continuous nowhere
differentiable functions

A {\it Besicovitch function} is an  $f\in\C(I,\RR)$ such that for
every $t\in I$, there is neither a finite or infinite right nor a
finite or infinite left derivative at $t$.

A  {\it Morse functions}, is an $f\in\C(I,\RR)$ such that
$$\max\{\vert D^+f(t)\vert,\vert D_+f(t)\vert\}=\max\{\vert
D^-f(t)\vert,\vert D_-f(t)\vert\}=\infty,~t\in I;
$$
we skip the left, resp.\ right  term of the $\max\{\}$ if $t$ is the
right, resp.\  left endpoint of the interval $I$.

We endow  $C(\lambda)$ with the uniform metric
$\rho (f,g) := \sup_{x \in I} |f(x) - g(x)|$.

\begin{proposition}\label{p:1}
$C(\lambda)$, endowed by the uniform metric $\rho$, is a complete
metric space. \end{proposition}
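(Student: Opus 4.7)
The plan is to show that $C(\lambda)$ is a closed subset of $C(I,I)$ endowed with the uniform metric, since $(C(I,I),\rho)$ is well-known to be a complete metric space (as a closed subset of the complete space of bounded functions $I\to\RR$). Once closedness is established, $C(\lambda)$ inherits completeness.

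So I take a Cauchy sequence $(f_n)$ in $C(\lambda)$. It converges uniformly to some $f\in C(I,I)$; I must verify two things: $f$ preserves $\lambda$, and $f(I)=I$.

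For measure preservation, I use the test-function characterisation of $f_*\lambda=\lambda$: it suffices to check that $\int_I g\circ f\,d\lambda=\int_I g\,d\lambda$ for every continuous $g\colon I\to\RR$. Since $g$ is uniformly continuous on $I$ and $f_n\to f$ uniformly, $g\circ f_n\to g\circ f$ uniformly, and the integrands are uniformly bounded by $\|g\|_\infty$. Bounded convergence (or even uniform convergence on a finite measure space) gives
\[
\int_I g\circ f\,d\lambda=\lim_{n\to\infty}\int_I g\circ f_n\,d\lambda=\lim_{n\to\infty}\int_I g\,d\lambda=\int_I g\,d\lambda,
\]
using $f_n\in C(\lambda)$ at the middle step. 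Since this holds for all $g\in C(I,\RR)$, the Riesz representation theorem (or a monotone-class argument starting from $g=\1_{[0,t]}$ approximated by continuous functions) forces $f_*\lambda=\lambda$ on all Borel sets.

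For surjectivity, $f(I)$ is a compact connected subset of $I$, hence a closed subinterval $[a,b]\subseteq[0,1]$. Its complement $U:=I\setminus[a,b]$ is open with $f^{-1}(U)=\emptyset$, so $\lambda(U)=\lambda(f^{-1}(U))=0$, forcing $a=0$ and $b=1$; thus $f\in C(\lambda)$.

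The verification is essentially routine, and no single step is really an obstacle. The only point requiring any care is the passage from equality of integrals of continuous test functions to equality of measures on all Borel sets, which is the standard consequence of the Riesz representation theorem on the compact space $I$.
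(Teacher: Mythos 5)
Your proof is correct and complete; the paper itself omits the argument (``we leave the standard proof of this result to the reader''), and what you have written is exactly the standard argument the authors have in mind: closedness of $C(\lambda)$ in the complete space $(C(I,I),\rho)$, with measure preservation of the limit checked against continuous test functions via Riesz representation and surjectivity recovered from measure preservation. Nothing to correct.
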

We leave the standard proof of this result to the reader.

Recall that  {\it a knot point} of function $f$ is a point $x$ where
$D^{+}f(x)=D^{-}f(x)=\infty$ and $D_{+}f(x)=D_{-}f(x)=-\infty$.
The following theorem states a consequence of more general result proved in \cite{Bo91}.

\begin{theorem}\label{t:1}~The $C(\lambda)$-typical function
has a knot point at $\lambda$-almost every point.
\end{theorem}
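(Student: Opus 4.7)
The plan is a Baire category argument in the complete metric space $(C(\lambda), \rho)$ of Proposition \ref{p:1}. I want to show that the set of $f \in C(\lambda)$ for which $\lambda$-a.e.\ $x \in I$ is a knot point is residual. A point $x$ fails to be a knot point iff at least one of the four Dini derivatives is not the appropriate infinity, and this condition admits a countable quantitative decomposition: for $M, n \in \N$, define
$$S^+_{M,n}(f) := \{x \in I : f(y) - f(x) \le M(y-x) \text{ for all } y \in I \cap [x, x + 1/n]\},$$
which captures $D^+f(x) \le M$, and analogously define closed sets $S^-_{M,n}, T^\pm_{M,n}$ for the other three Dini conditions (so that $x \in T^+_{M,n}(f)$ captures $D_+f(x) \ge -M$, etc.). Then $\{x : x \text{ is not a knot point of } f\} = \bigcup_{M,n \in \N}\bigl(S^+_{M,n}(f) \cup S^-_{M,n}(f) \cup T^+_{M,n}(f) \cup T^-_{M,n}(f)\bigr)$.

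For each $M, n, k \in \N$ I would show that $\mathcal{U}^+_{M,n,k} := \{f \in C(\lambda) : \lambda(S^+_{M,n}(f)) < 1/k\}$ and its three analogues are open and dense. Openness follows from upper semicontinuity of $f \mapsto \lambda(S^+_{M,n}(f))$: if $f_j \to f$ uniformly and $x_j \in S^+_{M,n}(f_j)$ with $x_j \to x$, passing to the limit in $f_j(y) - f_j(x_j) \le M(y - x_j)$ gives $x \in S^+_{M,n}(f)$, so $\limsup_j S^+_{M,n}(f_j) \subset S^+_{M,n}(f)$ and reverse Fatou yields $\limsup_j \lambda(S^+_{M,n}(f_j)) \le \lambda(S^+_{M,n}(f))$. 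For density, given $f \in C(\lambda)$ and $\delta > 0$, I would construct $g \in C(\lambda)$ with $\rho(f, g) < \delta$ by replacing $f$ on a sufficiently fine partition of $I$ by a piecewise linear, $\lambda$-preserving zigzag whose slopes alternate in sign with magnitude much larger than $M$ and whose graph stays within $\delta$ of $f$ on each piece; $\lambda$-preservation on each piece is achieved by choosing the zigzag to have the same pushforward of Lebesgue measure as $f$ restricted to that piece. For such $g$, every $x$ outside a small neighborhood of the zigzag vertices admits some $y \in (x, x + 1/n)$ with $g(y) - g(x) > M(y - x)$, so $x \notin S^+_{M,n}(g)$; the measure of the exceptional set can be made less than $1/k$ by taking the partition sufficiently fine.

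The countable intersection of these open dense sets is residual in $C(\lambda)$, and every $f$ in it satisfies $\lambda(S^\pm_{M,n}(f)) = \lambda(T^\pm_{M,n}(f)) = 0$ for all $M, n$, hence has knot points at $\lambda$-a.e.\ point. The main obstacle is the density step, since the $\lambda$-preserving constraint forbids naive perturbations; the key enabling fact is that piecewise linear $\lambda$-preserving sawtooth maps with arbitrarily large alternating slopes are dense in $C(\lambda)$, so the required oscillatory perturbation can be simultaneously uniformly small and measure-preserving. The statement is invoked from \cite{Bo91}, where it appears as a corollary of a more general theorem.
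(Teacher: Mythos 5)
The paper does not actually prove Theorem \ref{t:1}: it is stated as a consequence of a more general result in \cite{Bo91} and simply cited, so your self-contained Baire category argument is a genuinely different route. Your decomposition into the closed sets $S^\pm_{M,n}, T^\pm_{M,n}$ is the right countable quantification of ``not a knot point,'' and the openness step is sound: the sets are closed, the Kuratowski upper semicontinuity under uniform convergence gives $\limsup_j S^+_{M,n}(f_j)\subset S^+_{M,n}(f)$, and reverse Fatou on the finite measure space $I$ does the rest. The comparison is this: citing \cite{Bo91} buys the paper brevity and offloads the real work, whereas your argument makes the mechanism visible --- but in doing so it exposes where the work actually lives, namely the density step. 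Two points there deserve care. First, the ``key enabling fact'' that steep piecewise linear $\lambda$-preserving sawtooth maps are $\rho$-dense in $C(\lambda)$ is true but is itself a nontrivial lemma (one must first approximate $f$ by a piecewise affine element of $C(\lambda)$ and then apply a window perturbation replacing each affine branch of slope $s$ by a $(2k+1)$-fold zigzag of slopes $\pm(2k+1)s$); asserting it without proof leaves the technical heart of the theorem --- essentially the content of \cite{Bo91} --- unproved. Second, controlling the exceptional set in $S^+_{M,n}(g)$ requires more than a fine partition: a point $x$ high up on a \emph{decreasing} branch sees no $y$ to its right with difference quotient exceeding $M$ until the zigzag has climbed back above $g(x)$, and a computation shows the bad set is roughly the top fraction $2M/(K+M)$ of each decreasing branch, where $K$ is the zigzag slope; so it is the steepness $K\to\infty$ (i.e., the number of folds), not merely the mesh of the partition, that makes this set of measure less than $1/k$. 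With those two points repaired, your proof is correct and arguably more informative than the paper's citation.
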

The next result generalizes a classical result  result of Saks
\cite{Sa32}.

\begin{corollary}\label{cor:1}
The set of Besicovitch functions is a meager set in  $C(\lambda)$.
\end{corollary}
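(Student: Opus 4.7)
My plan is to adapt the Baire-category argument of Saks~\cite{Sa32} to the space $C(\lambda)$, the principal new difficulty being that perturbations must preserve Lebesgue measure. The quantitative characterization I rely on is that $f$ is Besicovitch iff, for every $t \in I$, both the right and left oscillations
$$\omega_f^R(t) := D^+f(t)-D_+f(t), \qquad \omega_f^L(t) := D^-f(t)-D_-f(t)$$
are strictly positive in the extended sense.

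First, I would decompose Besicovitch into a countable union of closed sets in $C(\lambda)$. For Besicovitch $f$, the positive function $t \mapsto \min(\omega_f^R, \omega_f^L)(t)$ has superlevel sets covering $I$ as $N \in \N$ varies. By Baire's theorem applied inside $I$ (passing to closures to ensure closedness, and to an upper-semicontinuous envelope $\bar\omega$ if necessary), there exist $N \in \N$ and a closed rational subinterval $J \subset I$ on which the oscillations are uniformly at least $1/N$. This gives
$$\text{Besicovitch} \subset \bigcup_{N \in \N} \bigcup_{J \in \mathcal J} B_{N,J}, \qquad B_{N,J} := \{f \in C(\lambda) : \bar\omega_f^R(t),\bar\omega_f^L(t) \ge 1/N \text{ for all } t \in J\},$$
where $\mathcal J$ is the countable family of closed rational subintervals of $I$. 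Each $B_{N,J}$ is closed in $C(\lambda)$ by a standard $\limsup/\liminf$ argument under uniform convergence $f_k \to f$.

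The main step, and principal obstacle, is to show each $B_{N,J}$ is nowhere dense. Given $f \in C(\lambda)$ and $\eps > 0$, I would construct $g \in C(\lambda)$ with $\rho(f,g) < \eps$ that admits a point $t_0 \in J$ at which the right-sided difference quotients are essentially constant on a small scale, so that $\omega_g^R(t_0) < 1/N$. Pick $t_0$ in the interior of $J$, fix a small neighborhood $V = [t_0-\eta, t_0+\eta] \subset J$, and replace $f|_V$ by an equimeasurable modification $g|_V$ that coincides with an affine function of prescribed slope on a sub-interval $[t_0, t_0+\eta'] \subset V$. The challenge is equimeasurability: to keep $g \in C(\lambda)$, the distribution of $g|_V$ under Lebesgue measure must match that of $f|_V$. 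This is achieved using sawtooth or tent-type building blocks of the kind developed in~\cite{Bo91} and in the construction of the Besicovitch--Morse function (Theorem~\ref{t:3}): subdivide $V$, place the affine segment on one piece, and compensate on the remaining pieces via an equimeasurable rearrangement of values.

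Combining the three steps, Besicovitch is contained in a countable union of closed nowhere dense subsets of the complete metric space $(C(\lambda),\rho)$ (Proposition~\ref{p:1}), hence it is meager, which proves Corollary~\ref{cor:1}. The main technical obstacle is the measure-preserving local surgery in the nowhere-dense step; this is what prevents a verbatim transcription of Saks's argument and must be handled by the measure-preserving constructions developed in~\cite{Bo91} and elsewhere in this paper.
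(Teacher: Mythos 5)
Your plan has a fatal structural flaw, and it is worth seeing exactly where it breaks, because the paper's own Theorem~\ref{t:1} supplies the counterexample. Let $K\subset C(\lambda)$ be the residual (hence dense, by Proposition~\ref{p:1} and Baire) set of functions having a knot point at $\lambda$-almost every point. For $f\in K$ we have $D^+f(t)=+\infty$ and $D_+f(t)=-\infty$ at a.e.\ $t$, so $\omega^R_f=\omega^L_f=+\infty$ on a set of full measure, hence on a dense set, and therefore the upper semicontinuous envelopes satisfy $\bar\omega^R_f\equiv\bar\omega^L_f\equiv+\infty$. Consequently $K\subset B_{N,J}$ for \emph{every} $N$ and $J$, so each $B_{N,J}$ is dense in $C(\lambda)$ and cannot be nowhere dense; it is also not closed (piecewise affine members of $C(\lambda)$ are uniform limits of elements of $K$ but have $\bar\omega^R=0$ off a finite set). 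The claimed ``standard $\limsup/\liminf$ argument'' for closedness does not exist: Dini derivatives are scale-free local quantities and are in no way semicontinuous functionals of $f$ in the uniform metric. If you drop the envelope and use $\omega$ itself, the closedness problem persists and, in addition, the decomposition step fails, since Baire's theorem in $I$ only gives you a set dense in $J$ on which $\omega\ge 1/N$, not all of $J$. For the same reason your nowhere-density step is insufficient even in principle: exhibiting one locally affine $g$ with $\rho(f,g)<\eps$ does not show $\overline{B_{N,J}}$ has empty interior, and by the above it never will, because every ball meets $K$. The measure-preserving surgery you identify as ``the main obstacle'' is actually the easy part; the obstacle is that the Besicovitch class is not naturally an $F_\sigma$ set, which is why Saks himself did not argue this way.

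The paper's proof goes in the opposite direction: instead of decomposing the Besicovitch class, it exhibits a residual set disjoint from it. By Theorem~\ref{t:1} the typical $f\in C(\lambda)$ satisfies $D^+f\ge 0$ a.e.\ (knot points a.e.) yet is not non-decreasing; the monotonicity theorem of \cite[Theorem 7.3]{Sa37} (if $D^+f\ge 0$ a.e.\ and $D^+f>-\infty$ everywhere, then $f$ is non-decreasing) then forces $D^+f(x_0)=-\infty$, hence $D^+f(x_0)=D_+f(x_0)=-\infty$, at some point $x_0$, so $f$ has an (infinite) right derivative there and is not Besicovitch. This is a three-line argument once Theorem~\ref{t:1} is available, and it is the route you should take: prove meagerness by finding a residual set of non-Besicovitch functions, not by covering the Besicovitch functions with closed sets.
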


\begin{proof}
We use the following well known result (see \cite[Theorem 7.3]{Sa37}):
\textit{if $D^+f(x) \ge 0$ for a.e. $x \in I$ and $D^+f(x) > - \infty$
for every $x \in I$, then $f$ is
non-decreasing.}

By Theorem \ref{t:1} there is a residual set $K\subset C(\lambda)$
such that each element
of $K$ has a knot point at $\lambda$ almost every point of $I$. Fix $f
\in K$,  we have
 $D^+f(x) = +\infty \ge 0$ a.e., and $f$ can not be non-decreasing.
Applying the above result, we conclude that $D^+(x_0) = - \infty$ for
at least one point $x_0 \in I$;
in particular $f$ is not a Besicovitch function.
\end{proof}

Now we state our main result.
\begin{theorem}\label{t:3}
There is a Besicovitch-Morse function in
 $C(\lambda)$.
\end{theorem}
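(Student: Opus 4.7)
The plan is to construct the desired $f$ as a uniform limit of piecewise linear $\lambda$-preserving maps $f_n$, obtained by iteratively replacing each linear piece of $f_n$ by a carefully designed measure-preserving zigzag. By Proposition~\ref{p:1}, uniform convergence delivers $f\in C(\lambda)$; the zigzag structure is arranged so that the limit $f$ has both the Morse and Besicovitch properties pointwise. The construction is modeled on Pepper's geometric idea \cite{Pe28}.

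\textbf{The zigzag.} Set $f_0=\mathrm{id}_{[0,1]}$. Given $f_n$ and a linear piece $[a,b]\to[c,d]$ of slope $s$, fix integer parameters $k_n\ge 1$ and $N_n\ge 2$, first subdivide $[a,b]$ into $N_n$ equal sub-intervals $[a_i,a_{i+1}]\to[c_i,c_{i+1}]$, and on each of these replace the linear interpolation by a $(2k_n+1)$-tooth tent zigzag joining $(a_i,c_i)$ to $(a_{i+1},c_{i+1})$ with alternating slopes $\pm(2k_n+1)s$, every tooth covering the full sub-range $[c_i,c_{i+1}]$. For $y$ in the interior of $[c_i,c_{i+1}]$ there are then $2k_n+1$ preimages each of absolute slope $(2k_n+1)|s|$, so the new pushforward density is $1/|s|$, matching the original linear piece; hence $f_{n+1}\in C(\lambda)$. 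Since the zigzag is trapped in $[c_i,c_{i+1}]$ of height $(d-c)/N_n$, we have $\rho(f_{n+1},f_n)\le r_n/N_n$, where $r_n$ is the maximum height of a level-$n$ linear piece. Taking, for concreteness, $k_n\equiv 1$ and $N_n=2^{n+1}$ makes $r_n=\prod_{m<n}N_m^{-1}\to 0$ super-geometrically, so $(f_n)$ is uniformly Cauchy and $f:=\lim f_n\in C(\lambda)$ by Proposition~\ref{p:1}.

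\textbf{Besicovitch--Morse property.} Crucially, every breakpoint of $f_n$ remains a breakpoint of all later $f_m$, so $f(x)=f_n(x)$ at each such $x$. To verify the Morse condition at $t\in I$ with prescribed $M,\delta>0$, choose $n$ so that the piece widths $w_{n+1}$ at level $n+1$ are $<\delta$, the slope magnitudes $3^{n+1}$ there exceed $M$, and the error ratio $\rho(f,f_{n+1})/w_{n+1}$ (of order $(3/2)^n$ with the above parameters) is a vanishing fraction of that slope. Since $t$ lies in some tooth of $f_{n+1}$ of slope $\pm 3^{n+1}$, a breakpoint $x$ within the same tooth realises $|f(x)-f(t)|/|x-t|\ge M$ with that sign, giving either $D^+f(t)=+\infty$ or $D_+f(t)=-\infty$, hence $\max\{|D^+f(t)|,|D_+f(t)|\}=\infty$; the left-hand version is symmetric. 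For the Besicovitch property, I would further consider breakpoints in adjacent teeth of the opposite sign and compute the difference quotient as a weighted average of the two slopes; this yields a value of opposite sign whose magnitude still tends to infinity, showing that $D^{\pm}f(t)$ and $D_{\pm}f(t)$ differ, so no one-sided derivative (finite or infinite) exists at $t$.

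\textbf{Main obstacle.} The technical heart is the three-way tension among (i) $\lambda$-preservation, (ii) uniform convergence, and (iii) large slopes needed for the Besicovitch--Morse property to survive the uniform error; a single-scale zigzag fails because $\lambda$-preservation rigidly ties (width)$\times$(slope) to the original piece height. The two-scale design above decouples amplitude from slope. The most delicate step is checking the Besicovitch condition at \emph{every} $t$ — in particular at exceptional points that remain near tooth boundaries at infinitely many levels — which requires tracking the slope-signs of the teeth containing $t$ across levels and a careful weighted-average computation of difference quotients across tooth-boundaries.
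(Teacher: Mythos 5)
Your construction is genuinely different from the paper's (which builds irregular Cantor staircases with flat segments and recursively fills the flat segments with rescaled staircase-triangles, encoding the needed irregularity in the asymmetric placement of the gaps), and the parts of your argument concerning membership in $C(\lambda)$ are sound: the local density count $\sum_{x\in f_{n+1}^{-1}(y)}1/|f_{n+1}'(x)|=\sum_{x\in f_n^{-1}(y)}1/|f_n'(x)|$ does show each $f_{n+1}\in C(\lambda)$, the two-scale design does decouple amplitude from slope, and Proposition~\ref{p:1} then gives $f\in C(\lambda)$. The persistence of breakpoints and the estimate $\rho(f,f_{n+1})/w_{n+1}=O((3/2)^n)=o(3^{n+1})$ are also correct.

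The gap is in the verification of the Besicovitch--Morse property, which is where the entire difficulty of the theorem lives (compare Corollary~\ref{cor:1}: Besicovitch functions are \emph{atypical} in $C(\lambda)$, so no soft argument can finish the job). Concretely: (a) your ``breakpoint within the same tooth'' only has the error term $\rho(f,f_{n+1})/|x-t|$ dominated by $3^{n+1}$ when $|x-t|$ is comparable to $w_{n+1}$, i.e.\ when you take the \emph{far} endpoint of the tooth; that endpoint lies on one fixed side of $t$, so at each level you control only one of the two one-sided maxima, and ``the left-hand version is symmetric'' is false pointwise --- a point $t$ sitting in the left half of its tooth at every level never receives a controlled left-hand quotient from this argument. (b) For points that remain within $o(w_{n+1})$ of a tooth boundary at every level, the near endpoint gives an uncontrolled quotient (the error term can dominate with either sign), and one must pass to breakpoints of \emph{adjacent} teeth, whose slope signs and range levels depend on the position inside the zigzag; you name this case but do not treat it. (c) The Besicovitch property requires, on each side of each $t$, two sequences of difference quotients with \emph{different} limits in $[-\infty,+\infty]$; your ``weighted average of the two slopes'' for a breakpoint in the adjacent tooth equals $\pm 3^{n+1}(\delta_1-\delta_2)/(\delta_1+\delta_2)$ and can be zero or of the \emph{same} sign depending on where $t$ sits in its tooth and whether the adjacent tooth belongs to the same sub-interval, so the claimed sign reversal is not automatic. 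These exceptional points are precisely the content of Part~III of the paper's proof (cases III$_1$, III$_2$, III$_2^{\rm b}$, III$_2^{\rm u}$), which occupies most of the argument and relies on quantitative bookkeeping (the sequences $\alpha_n$, $\kappa(n)$, and estimates \eqref{e:20}--\eqref{a:3}) that has no counterpart in your sketch. Until the analogous case analysis is carried out for your zigzags, the proof is incomplete at its core.
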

\begin{proof} We begin by a sketch of our construction.  The first
step is to construct an irregular
Cantor staircase $f_{0} : [0,1/2] \to \RR$
then to extend by symmetry to a tent-like devils' staircase
map (see Figure \ref{fig:f0}).  Next we modify this map by replacing
each flat segment by
an affinely rescaled copy of $f_{0}$ pointing downwards, producing the
map $f_{1}$.
At each stage we will have a modify the resulting map by replacing the
flat segments by affinely rescaled
copies of the original map, the scaling becoming more skewed at each
step, and the direction alternates
between tent maps pointing up and down.

Given a $\sigma$ positive integer we construct a discontinuum $E_{\sigma} \subset
\left [0,\frac12\right ]$:
\begin{equation*}E_{\sigma}=\left [0,\frac12\right
]\setminus L_{\sigma},\ \
\text{where}\ \
L_{\sigma}=\bigcup_{m=1}^{\infty}\bigcup_{p=1}^{2^{m-1}}r_{m,p},\end{equation*}
the open intervals $r_{m,p}=(a_{m,p},b_{m,p})$ are chosen as
follows:\newline \noindent ($m=1$)\ $d_{1,1}=\left [0,\frac12\right
]$, $r_{1,1}\subset d_{1,1}$,
\begin{itemize}
\item[($K_{1,\sigma}$)] $b_{1,1}$ is the center of $d_{1,1}$,
    $\frac{\lambda(r_{1,1})}{\lambda(d_{1,1})}=\frac12-\frac1{2^{1+\sigma}}$;
\end{itemize}
\noindent ($m>1$, $m$ even), if $d_{m,1}\cdots d_{m,2^{m-1}}$ are (from
left to right) the intervals of the set $\left [0,\frac12\right
]\setminus
\bigcup_{q=1}^{m-1}\bigcup_{p=1}^{2^{q-1}}r_{q,p}$, then
$r_{m,p}\subset d_{m,p}$, and for a suitable increasing sequence
$(k_{\sigma}(m))_{m\ge 2,\text{ even}}$ of positive integers (to be
determined later)
\begin{itemize}
\item[($K^{\rm even}_m$)] $a_{m,p}=\min
    d_{m,p}+\frac{\lambda(d_{m,p})}{2^{k_{\sigma}(m)}}$,
    $b_{m,p}=\max
    d_{m,p}-\frac{\lambda(d_{m,p})}{2^{k_{\sigma}(m)}}$;
\end{itemize}

\noindent ($m>1$, $m$ odd), if $d_{m,1}\cdots d_{m,2^{m-1}}$ are (from
left to right) the intervals of the set $\left [0,\frac12\right
]\setminus
\bigcup_{q=1}^{m-1}\bigcup_{p=1}^{2^{q-1}}r_{q,p}$, then
$r_{m,p}\subset d_{m,p}$,
\begin{itemize}
\item[($K^{\rm odd}_{m,\sigma}$)] $b_{m,p}$ is the center of
    $d_{m,p}$ (we refer to this as the {\it center property}),
    $\frac{\lambda(r_{m,p})}{\lambda(d_{m,p})}=\frac1{2}-\frac1{2^{m+\sigma}}$.
\end{itemize}

Given a map $f: I \to I$ and $x,y \in I$, $x \ne y$, define
$$R(f,x,y) := \frac{f(x) - f(y)}{x-y}.$$

We consider  a continuous nondecreasing function $f_{0,\sigma}:\left
[0,1/2\right ]\rightarrow I$ satisfying $f_{0,\sigma}(0)=0$,
$f_{0,\sigma}(1/2)=1$, $f_{0,\sigma}$ constant on every interval
$r_{m,p}$ and satisfying for each $d_{m,p}:=[c,d]$, $b_{m,p}$
\begin{equation}\label{e:new}
R(f_{0,\sigma},b_{m,p},c) = R(f_{0,\sigma},d,c).
\end{equation}
Notice that this number is at least 2 for every $d_{m,p}$.
The function $f_{0,\sigma}$ is a Cantor steplike function.

Next let
$$k_{\sigma}(m) := 1 + \log_2 \left ( 1/2  +
\max_p\frac{\lambda(f_{0,\sigma}(d_{m,p}))}{\lambda(d_{m,p})} \right
);$$
this definition implies that for all even $m$ and $d_{m,p}=[c,d]$ we
have
\begin{equation}\label{e:14}
k_{\sigma}(m) > 2 \quad \text{   and  }  \quad 0<
R(f_{0,\sigma},d,a_{m,p})   \le  {1}/{2} .
\end{equation}

\begin{figure}[t]
\begin{tikzpicture}[scale=6]
\draw[opacity=0.3] (0,0) -- (0.5,1);
\draw [opacity=0.3] (0,0) -- (0.5,0) -- (0.5,1) -- (0,1) -- (0,0);
\draw[]  (89/192,23/24) -- (23/48,23/24);
\draw[]  (7/24,11/12) -- (11/24,11/12);
\draw[] (49/192,17/24) -- (13/48,17/24);
\draw[]  (1/8,1/2) -- (1/4,1/2);
\draw[] (65/576,17/36) -- (17/144,17/36);
\draw[] (1/72,4/9) -- (1/9,4/9);
\draw[] (1/576,4/18) -- (1/144,4/18);
\draw (0,0) -- (1/576,4/18);
\draw (1/144,4/18) -- (1/72,4/9);
\draw (1/9,4/9) -- (65/576,17/36);
\draw (17/144,17/36) --  (1/8,1/2) ;
\draw (1/4,1/2) -- (49/192,17/24);
\draw (13/48,17/24) -- (7/24,11/12);
\draw (11/24,11/12) -- (89/192,23/24);
\draw (23/48,23/24) -- (.5,1);
\draw[opacity=0.3] (1-0,0) -- (1-0.5,1);
\draw [opacity=0.3] (1-0,0) -- (1-0.5,0) -- (1-0.5,1) -- (1-0,1) --
(1-0,0);
\draw[]  (1-89/192,23/24) -- (1-23/48,23/24);
\draw[]  (1-7/24,11/12) -- (1-11/24,11/12);
\draw[] (1-49/192,17/24) -- (1-13/48,17/24);
\draw[]  (1-1/8,1/2) -- (1-1/4,1/2);
\draw[] (1-65/576,17/36) -- (1-17/144,17/36);
\draw[] (1-1/72,4/9) -- (1-1/9,4/9);
\draw[] (1-1/576,4/18) -- (1-1/144,4/18);
\draw (1-0,0) -- (1-1/576,4/18);
\draw (1-1/144,4/18) -- (1-1/72,4/9);
\draw (1-1/9,4/9) -- (1-65/576,17/36);
\draw (1-17/144,17/36) --  (1-1/8,1/2) ;
\draw (1-1/4,1/2) -- (1-49/192,17/24);
\draw (1-13/48,17/24) -- (1-7/24,11/12);
\draw (1-11/24,11/12) -- (1-89/192,23/24);
\draw (1-23/48,23/24) -- (1-.5,1);
\node at (0,-0.05) {\tiny $0$};
\node at (0.5,-0.05) {\tiny $1/2$};
\node at (1,-0.05) {\tiny $1$};
\end{tikzpicture}
\caption{The map $f_{0,\sigma}$}\label{fig:f0}
\end{figure}
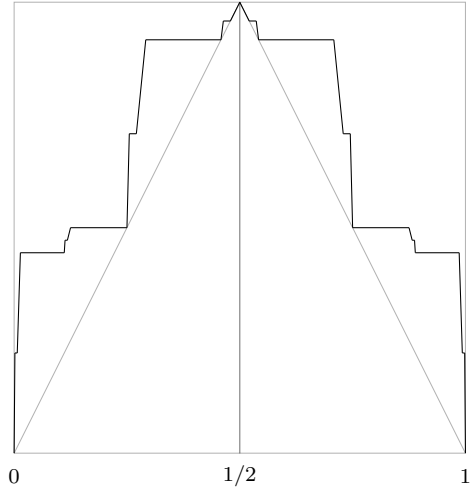

We extend $f_{0,\sigma}$ to the interval $I$ by  setting
 $$ f_{0,\sigma}(x):=f_{0,\sigma}(1-x),\ x\in \left [ 1/2,1\right ].
 $$  The function $f_{0,\sigma}$ and the interval $I$ form the
 \textit{basic $(\sigma=1)$-step triangle} of our construction.
The set $\{(x,f_{0,\sigma}(x));\ x\in \left [0,1/2\right ]\}$ is the
left side of the triangle, analogously the set
$\{(x,f_{0,\sigma}(x));\ x\in [1/2,1]\}$ is the right side
of the triangle. Now, we construct the desired function $f$ as
follows:

($c_0$) Start with the basic $(\sigma=1)$-step triangle with the base
$I$
and height $1$; the sides of the basic $(\sigma=1)$-step triangle
are the graph of  $f_{0}$ (see Figure \ref{fig:f0}). All
$E_1$-contiguous intervals, i.e., the holes
in the Cantor set $E_1$, and their counterparts in $\left [1/2,1\right
]$ will be called \textit{ $0$th $L$-segments} - the set of all $0$th
$L$-segments will be denoted by $\mathcal L_0$.

($c_1$) The flat segment corresponding to an interval $(a,b)\subset
r\in\mathcal L_0$ (and $f_{0}$) is the set
\begin{equation*}
\{(x,f_{0}(a))\colon~x\in [a,b]\};
\end{equation*}
  \noindent $m$ odd: for every element $r_{m,p}$ of $\mathcal L_0$
  construct affinely rescaled $(\sigma=1+m)$-step triangle whose base
  is the flat segment corresponding to $r_{m,p}$;

  \noindent $m$ even: for every element $r_{m,p}=(a,b)$ of $\mathcal
  L_0$ construct two affinely rescaled $(\sigma=1+m)$-step triangles
  whose bases are the flat segments corresponding to

 \begin{equation}\label{e:15}\left
 (a,b-\frac{1}{2^{k_1(m)}-2}(b-a)\right ),~\left
 (b-\frac{1}{2^{k_1(m)}-2}(b-a),b\right );\end{equation}
    constructed step  triangles are placed inwards the basic step triangle,
    the height of step triangle with the base corresponding to
    $r_{m,p}=(a,b)\subset d_{m,p}=[c,d]$ is equal to
\begin{equation}\label{e:9}f_0(a)-f_0(c).\end{equation}
The union of sides of all so far constructed step triangles defines the
function $f_1$. All new contiguous intervals (subintervals of some
previous $0$th $L$-segments) will be called
\textit{ $1$st $L$-segments} - the set of all $1$st $L$-segments will
be denoted by $\mathcal L_1$.\newline ($c_n$) Consider an element
$r^{n-1}_{m_{n-1},\cdot}=(a,b)$ from $\mathcal L_{n-1}$  satisfying
\begin{equation*}
r^{0}_{m_{0},\cdot}\supset r^{1}_{m_{1},\cdot}\supset \cdots \supset
r^{n-1}_{m_{n-1},\cdot},~r^{i}_{m_{i},\cdot}\in\mathcal L_i;
\end{equation*}
\noindent $m_{n-1}$ odd: construct affinely rescaled
$(\sigma=1+m_0+\cdots+m_{n-1})$-step triangle whose base is the flat
segment corresponding to $r^{n-1}_{m_{n-1},\cdot}$ and $f_{n-1}$;

\noindent $m_{n-1}$ even: construct two affinely rescaled
$(\sigma=1+m_0+\cdots+m_{n-1})$-step triangles whose bases are the flat
segments corresponding to $f_{n-1}$ and (\ref{e:15}) with
$k_{1+m_0+\cdots+m_{n-2}}(m_{n-1})$; the constructed step triangles are placed
inwards the bigger step triangle on whose side has its base, the height of
the step triangle corresponding to
$r^{n-1}_{m_{n-1},\cdot}=(a,b)\subset d^{n-1}_{m_{n-1},\cdot}=[c,d]$
is equal to $$\vert f_{n-1}(a)-f_{n-1}(c)\vert.$$ Realizing the
construction described above for all elements from $\mathcal L_{n-1}$,
the union of sides of all so far constructed step triangles define the
function $f_n$. All new contiguous intervals (subintervals of some
previous $(n-1)$st $L$-segments) will be called \textit{ $n$th
$L$-segments} - the set of all $n$th $L$-segments will be denoted by
$\mathcal L_n$.
\vglue2mm
Finally, put $f=\lim_{n\rightarrow \infty} f_n$ (obviously
$\rho(f_{n-1},f_n)\le\frac1{2^{n/2}}$). In what follows we will
repeatedly use the following easy consequence of our construction:

\begin{equation}\label{e:10}\forall n\in\N\cup\{0\}~\forall x\in
[0,1]\setminus\bigcup_{L\in\mathcal
L_n}\colon~f(x)=f_n(x).\end{equation}

In order to verify that the function $f$ is a Besicovitch-Morse
function we distinguish several cases.

\noindent {\bf I.} First, we assume that $x\in [0, 1]$ is not a point
of any $0$th $L$-segment. Because of symmetry we only consider points
from $[0,1/2]$.

\noindent {\bf I(+)} Assume that $x\in [0,1/2)$ is not the left
endpoint of any $0$th $L$-segment. We show that $f'_+(x)$ does not
exist and at least one of the right Dini derivatives of $f$ at $x$ is
infinite.

Fix $h>0$ arbitrarily small, then there is an odd $m$ such that for
some $p$, the $0$th $L$-segment
$r_{m,p} = (a,b)\subset d_{m,p}=[c,d]$  is  contained in
$(x,x+h)$.  We choose $p$ so that it is the left most such segment,
thus any $0$th $L$-segment $r_{m',p'}$
between $x$ and $a$ satisfies $m'>m$.  If $x < c$, then since $x$ is
not a point of any $0$th $L$-segment,
we would have another $L$-segment $r_{m',p'}$ between $x$ and $a$ with
$m' \le m$, thus
\begin{equation*}c\le x<a<\frac{a+b}{2}.\end{equation*}
Since by (\ref{e:10}) $f(t)=f_{0}(t)$ for $t\in\{c,x,a\}$, since $m$
is odd we obtain from (\ref{e:9})
\begin{equation}\label{e:2}f\left (\frac{a+b}{2}\right )=f(c).
\end{equation}
Furthermore, since $f_{0}$ is monotone on $E$, again using
\eqref{e:10}
\begin{equation}\label{e:2'}f(c)\le f(x)< f(a).
\end{equation}
The number $h$ was chosen arbitrarily small, so \eqref{e:2} and
\eqref{e:2'} imply
$$D_+f(x)\le 0\le D^+f(x).$$

Let us evaluate $\max\{\vert D^+f(x)\vert,\vert D_+f(x)\vert\}$.
Define $\eps_m\in (0,1]$ by
\begin{equation}\label{e:12}f(a)-f(x)=\eps_m[f(a)-f(c)]\end{equation}
Using the fact that $m$ is odd, and $b$ is in the middle of $[c,d]$,
($K^{\rm odd}_{m,1}$) implies
that $a-c = \lambda(d_{m,p})/2^{m+1}$, thus from \eqref{e:12}  we have
\begin{equation}\label{e:6}
\frac{f(a)-f(x)}{a-x}=\frac{\eps_m[f(a)-f(c)]}{a-x}\ge
\frac{\eps_m[f(a)-f(c)]}{a-c}
=\frac{\eps_m[f(a)-f(c)]}{\frac{\lambda(d_{m,p})}{2^{m+ 1}}}.
\end{equation}
Furthermore combining  \eqref{e:new}  with the fact that $b$ is in the
middle of $[c,d]$, and then the
fact that $R(f_{0},d,c)$ is at least 2, yields
\begin{equation}\label{e:6'}
\frac{\eps_m[f(a)-f(c)]}{\frac{\lambda(d_{m,p})}{2^{m+1}}}=\frac{\eps_m\frac{\lambda(f_{0}(d_{m,p}))}{2}}{\frac{\lambda(d_{m,p})}{2^{m+1}}}\ge
\frac{\eps_m\frac{2\lambda(d_{m,p})}{2}}{\frac{\lambda(d_{m,p})}{2^{m+1}}}=\eps_m2^{m+1}.
\end{equation}
So if $\eps_m\ge\eps$ for some positive $\eps$ and a sequence of odd
$m$'s, we immediately have $D^+f(x)=\infty$.

To the contrary assume that the sequence $(\eps_m)_{m\text{-odd}}$
converges to zero. In this case we will show
$D_+f(x) = - \infty$.
Let us denote $d^{(1)}_{1,1} := [c^{(1)},d^{(1)}] :=[a,\frac{a+b}{2}]$
and $r^{(1)}_{1,1}=(a^{(1)},b^{(1)})\in\mathcal L_1$,
$r^{(1)}_{1,1}\subset d^{(1)}_{1,1}$. Using
($c_1$) for the intervals $d^{(1)}_{1,1}, r^{(1)}_{1,1}$,  and the
center property we obtain

$$a^{(1)}-a=\frac{\lambda(d^{(1)}_{1,1})}{2}-\lambda(r^{(1)}_{1,1})=\frac{\lambda(d^{(1)}_{1,1})}{2^{1+m}}.
$$
Using  ($K^{\rm odd}_{m,1}$)  and the inequalities
$\lambda(d^{(1)}_{1,1})=\frac{\lambda(r_{m,p})}{2} <
\frac{\lambda(d_{m,p})}{2^2}$
(which both follow from the center property) we obtain
$$\frac{\lambda(d^{(1)}_{1,1})}{2^{1+m}} <
\frac{1}{2^2}\frac{\lambda(d_{m,p})}{2^{m+1}}=\frac{a-c}{2^2}.
$$
Thus
$$
a^{(1)}-x=a^{(1)}-a+a-x < \frac{a-c}{2^2}+a-c<2(a-c).
$$
By construction, the center property implies $f(a^{(1)}) = (f(a) +
f(c))/2$,
hence from (\ref{e:12}) we obtain
\begin{equation*}
- R(f,x,a^{(1)}) =\frac{(1/2-\eps_m)[f(a)-f(c)]}{a^{(1)}-x}> \left
({1/2}-\eps_m \right )  \frac{R(f,a,c)}{2}
\end{equation*}
But, \eqref{e:6} and \eqref{e:6'} imply $R(f,a,c) \ge 2^{m}$, and we
conclude
\begin{equation}\label{e:13}
- R(f,x,a^{(1)})   \ge -\left (1/2-\eps_m\right )\cdot 2^{m-1}.
 \end{equation}

By our assumption  $\eps_m$ converge to zero, so (\ref{e:13}) implies
$$-D_+f(x)=\infty=\max\{\vert D^+f(x)\vert,\vert D_+f(x)\vert\}.$$
We have already seen that $D_+f(x)\le 0\le D^+f(x)$, i.e., $f'_+(x)$
does not exist.

\noindent {\bf I(-)} Assume that $x\in (0,1/2]$ is not the right
endpoint of any $0$th $L$-segment. We show that $f'_-(x)$ does not
exist and $D^-f(x)=\infty$.

Fix $h>0$ arbitrarily small, let $r_{m,p} = (a,b)\subset
d_{m,p}=[c,d]$  be the $0$th $L$-segment contained in
$(x-h,x)$. W.l.o.g. we can assume that $m$ is even and that any $0$th
$L$-segment between $x$ and $b$ is labeled by an $m'>m$. Then
\begin{equation*}\label{e:51}b<x\le d\end{equation*}
and similarly as in  (\ref{e:2}) and \eqref{e:2'},
denoting by $e := e(m) :=  b - \frac{b-a}{2^{k_1(m)+1 } -
2^2}$ the middle of the interval $(b-\frac{1}{2^{k_1(m)}-2}(b-a),b)$,
from (\ref{e:15})
\begin{equation*}f(e) =f(c)<f(b)< f(x)\le f(d).
\end{equation*}
Using ($K^{\rm even}_m$), (\ref{e:14}) and (\ref{e:10}) we obtain for
each even $m$
\begin{align*}
&0<R(f,x,a) \le \frac{f(d)-f(a)}{x-a} = R(f,d,a) \frac{d-a}{x-a} < 1/2
\frac{d-a}{x-a}
\end{align*}
But for even $m$ we have
 $$\frac{d-a}{x-a} < \frac{d-a}{b-a} = \frac{2^{k_1(m)}-1}{2^{k_1(m)}
 -2},$$
 thus
\begin{equation}\label{e:16'}
0 < R(f,x,a)  <  1.
\end{equation}

Using $f(e)=f(c)$, $f(x) \ge f(b)$  and the definition of $k_1(m)$
yields the first inequality
\begin{align}\label{e:17}
& R(f,x,e) \ge
\frac{f(b)-f(c)}{2\frac{d-c}{2^{k_1(m)}}}=\frac{(2^{k_1(m)}-1)2^{k_1(m)
-1}}{2^{k_1(m)}}R(f,d,c) >2^{k_1(m)}-2;
\end{align}
while the equality follows from \eqref{e:new} and
 the last inequality follows from the fact that by our construction
 $\frac{\lambda(f(d_{m,p}))}{\lambda(d_{m,p})}\ge 2$ for each even $m$
 and each $p$.

When $h$ approaches $0$, the integer $m$ tends to $+ \infty$ and thus,
(\ref{e:16'}) and (\ref{e:17}) imply $D_-f(x)\le 1<D^-f(x)=\infty$.

\noindent {\bf II.} Second, we assume that for some
positive integer $n$, $x\in I$ is a point of some $n-1$st $L$-segment
and does not belong to any $n$th $L$ segment. Then the point
$(x,f(x))$ lies on the side of a
step triangle which is an affinely rescaled version of the basic
$(\sigma=1+m_0+\cdots+m_{n-1})$-step triangle; the facts that neither finite nor
infinite $f'_+(x),f'_-(x)$ exist and
$$\max\{\vert D^+f(x)\vert,\vert D_+f(x)\vert\}=\max\{\vert
D^-f(x)\vert,\vert D_-f(x)\vert\}=\infty$$
can be proven analogously as in {\bf I}.

\noindent {\bf III.}  Finally suppose that $x\in I$ belong
to $L$-segments of all orders, i.e.,
$\{x\}=\bigcap_{n=1}^{\infty}s_{m_n,p_n}$, where
$s_{m_n,p_n}=(a_n,b_n)$ equals to $r_{m_n,p_n}$ for $m_n$ odd, resp.\
$s_{m_n,p_n}\subset r_{m_n,p_n}$ for $m_n$ even and $r_{m_n,p_n}$
denotes the $(n-1)$st $L$-segment the point $x$ belongs to.
The function $f_{n}$ and the flat segment on the graph of $f_{n-1}$
corresponding to the interval
$(a_n,b_n)$ form a rescaled step triangle $\Delta_n$  which we use to
estimate the derivatives
$D^+f$, $D_+f$, $D^-f$, $D_-f$ at the point $x$.  From the
construction it follows that

\begin{enumerate}[i)]
\item $\Delta_n$ is oriented upwards for $n$ even, and downwards for
    $n$ odd and \\
$f((a_n,b_n))\supset f((a_{n+1},b_{n+1}))$ for each $n$.
\end{enumerate}

Denote $\ell_n$, resp.\  $h_n$ the  length of the base, resp.\  height
of $\Delta_n$. In our construction at each step on the flat segment of
$f_{n-1}$ we build a ``tent'' consisting of two sides of $\Delta_n$
in such a way that
\begin{equation*}R(f,a_n,\gamma_n)>2
R(f,a_{n-1},\gamma_{n-1})\text{ for each }n,
\end{equation*}
(here $\gamma_n :=  a_n + (b_n - a_n)/2$), hence
\begin{enumerate}[ii)]
\item $\lim_{n\to\infty}h_n/\ell_n=\infty$.
\end{enumerate}

It follows from i)  that  $D^+(x) \ge 0 \ge D_+(x)$ and $D^-(x) \ge 0
\ge D_-(x)$.
 Thus if  $f'_+(x)$ exists then it equals 0,  but this is impossible
 if $f \in C(\lambda)$
(and similarly for $f'_-(x)$).  We will prove that $f \in C(\lambda)$
below, and thus
$f$ can not be a Besicovitch function.

The set $\{(t,f_n(t));\ t\in (a_n,\gamma_n)\}$ is the left side of
$\Delta_n$. By symmetry, we can suppose without loss of generality,
that the point $x$ corresponds to the left side of the step triangles for
infinitely many $n$.

\begin{figure}[h]
\begin{minipage}[ht]{0.49\linewidth}
\centering
\begin{tikzpicture}[scale=0.6]

\draw (0,0) -- (2.5,10) -- (5,0) -- (0,0);
\draw (0,-0.8) -- (5,-0.8);
\draw[dotted]  (0,0) -- (0,10)-- (2.5,10) -- (2.5,0);
\draw[dashed] (0,5) -- (2.5,5);
\node at (0,-1.2) {\tiny $a_n$};
\node at (2.5,-1.2) {\tiny $\gamma_n$};
\node at (5,-1.2) {\tiny $b_n$};

\node at (0.9,5.5) {\tiny $A$};
\node at (1.8,5.5) {\tiny $B$};
\node at (1.6,4.5) {\tiny $C$};
\node at (0.7,4.5) {\tiny $D$};
\end{tikzpicture}
\end{minipage} \nobreak
\begin{minipage}[ht]{0.49\linewidth}
\centering
\begin{tikzpicture}[scale=0.6]

\draw[] (0,10) -- (2.5,0) -- (5,10) -- (0,10);
\draw (0,-0.8) -- (5,-0.8);
\draw[dotted]  (2.5,10) -- (2.5,0) -- (0,0) -- (0,10);
\draw[dashed] (0,5) -- (2.5,5);
\node at (0,-1.2) {\tiny $a_n$};
\node at (2.5,-1.2) {\tiny $\gamma_n$};
\node at (5,-1.2) {\tiny $b_n$};

\node at (0.9,4.5) {\tiny $A$};
\node at (1.8,4.5) {\tiny $B$};
\node at (1.6,5.5) {\tiny $C$};
\node at (0.7,5.5) {\tiny $D$};
\end{tikzpicture}
\end{minipage}
\caption{$\Delta_n$ oriented upwards (left) and downwards
(right)}\label{fig:serge}
\end{figure}
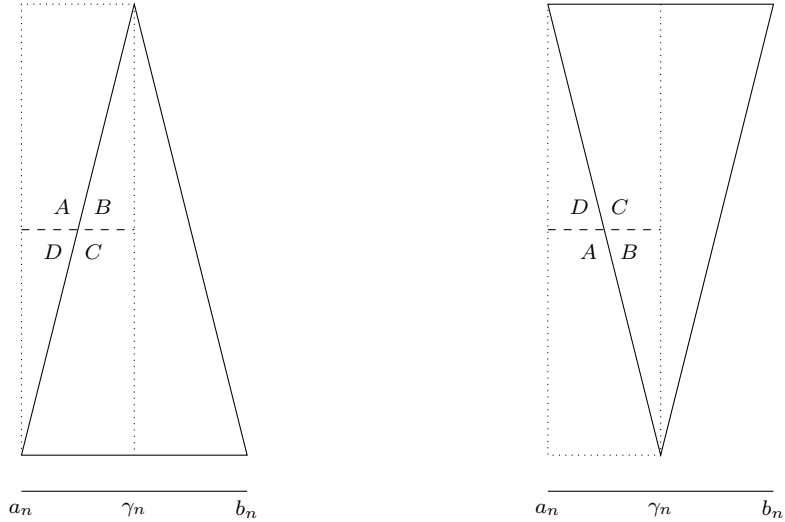

Consider Figure \ref{fig:serge}, the horizontal dotted line cuts the
step triangle $\Delta_n$ in the middle (in height).
The point $(x,f(x))$ must be in one of the sets $A,B,C$ or $D$. For
convenience we take
$A,B$ open, and $C,D$ closed.

{\bf III$_1$.} From ii) we deduce that  if for infinitely
many $n$
\begin{itemize}
\item[(1)]  the step  triangle $\Delta_n$ is oriented upwards, resp.
    downwards and $(x,f(x))\in A\cup B$, then $D_+f(x)  = - \infty$
    and $D^-f(x)  = \infty$, resp. $D^+f(x)  = \infty$ and $D_-f(x)
    = -\infty$,
\item[(2)] the step  triangle $\Delta_n$ is oriented upwards, resp.
    downwards and $(x,f(x))\in C\cup D$, then $D^+f(x)  = \infty$,
    resp. $D_+f(x)  = -\infty$.
\end{itemize}

{\bf III$_2$.} The argument for $D^-f(x)$ and $D_-f(x)$ when
$(x,f(x))\in C \cup D$ is more complicated.  First of all notice that
the above argument works without
change if we replace the dashed line
in the middle of the figure with a line at any fixed percentage of the
height. Thus the remain case
is when this percentage tends to zero.

Denote the percentage of the height of $\Delta_n$ corresponding to the
position of $(x,f(x))$ by
$\alpha_n= \frac{\vert f(x)-f(a_n)\vert}{h_n}\in (0,1)$.
Our assumption is
$$\lim_{n\to\infty}\alpha_n=0.$$
Let $n$ be even and sufficiently large to satisfy $\alpha_m\in
(0,1/2)$ for each $m\ge n$.
By our construction and the definition of $\alpha_n,\alpha_{n+1}$ (see
Figure \ref{fig:serge'})
 \begin{equation}\label{e:20}f(a_{n+1})-\alpha_{n+1}h_{n+1}=f(a_n)+\alpha_nh_n,~\alpha_nh_n\ge
 (1-\alpha_{n+1})h_{n+1}.
 \end{equation}

\begin{figure}[h]
\centering
\begin{tikzpicture}[scale=0.6]
\draw (0,0) -- (2.5,10) -- (2.5,0) -- (0,0);;
\draw[dashed] (0,1) -- (2.5,1);
\draw[] (0.1,1.5) -- (0.6,1.5) -- (0.25,0.3) -- (0.1,1.5);
\draw[dashed] (0.65,1.5) -- (2.5,1.5);
\draw
[decorate,decoration={brace,amplitude=2pt,mirror,raise=4pt},yshift=0pt]
(2.5,1.05) -- (2.5,1.5) node [black,midway,xshift=0.8cm] {\tiny
$ \qquad  \alpha_{n+1}h_{n+1}$};
\draw
[decorate,decoration={brace,amplitude=1.5pt,mirror,raise=4pt},yshift=0pt]
(0,0.3) -- (0,0) node [black,midway,xshift=0.8cm]
{\tiny \hspace{-5.5cm} $\alpha_nh_n - (1-\alpha_{n+1} )h_{n+1}$};

\draw
[decorate,decoration={brace,amplitude=2pt,mirror,raise=4pt},yshift=0pt]
(2.5,0) -- (2.5,0.95) node [black,midway,xshift=0.8cm]
{\tiny $\quad \alpha_nh_n$};

\draw (0,-0.8) -- (2.5,-0.8);
\node at (0,-1.2) {\tiny $a_n$};
\node at (2.5,-1.2) {\tiny $\gamma_n$};

\end{tikzpicture}
\caption{The upper left endpoint of the small triangle has coordinates
$(a_{n+1}, f(a_{n+1}))$.}\label{fig:serge'}
\end{figure}
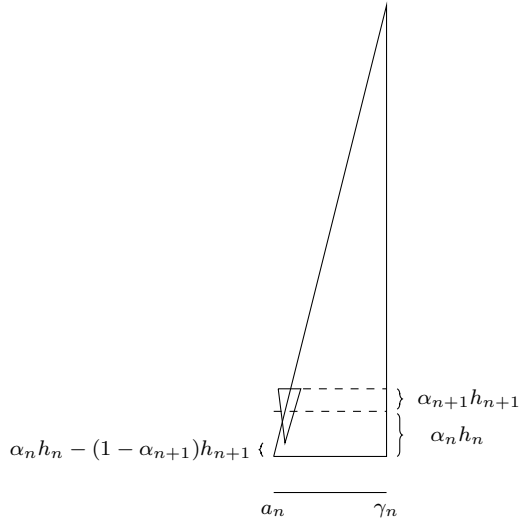

Assume that
\begin{equation}\label{e:19}\nonumber
f(a_{n+1})> f(a_n)+2\alpha_nh_n;
\end{equation}
using (\ref{e:20}) we get
 \begin{equation}\label{e:24}f(a_{n+1})-f(a_n)=\alpha_{n+1}h_{n+1}+\alpha_nh_n>
 2\alpha_nh_n
 \end{equation}
hence again from (\ref{e:20})
$$\alpha_{n+1}h_{n+1}>\alpha_nh_n\ge (1-\alpha_{n+1})h_{n+1}
$$
and $\alpha_{n+1}\ge 1/2$, a contradiction with our choice of
$\alpha_{n+1}$. It shows that
\begin{equation*}f(a_{n+1})\le
f(a_n)+2\alpha_nh_n.\end{equation*}

Choose $\kappa(n) \in \N$ such that  $\alpha_n\in
[1/2^{\kappa(n)+2},1/2^{\kappa(n)+1})$; this implies
\begin{equation}\label{e:21}
f(a_{n+1})\le  f(a_n)+\frac{h_n}{2^{\kappa(n)}}.
\end{equation}

In our basic construction for each fixed $m$ there are
finitely many $0$th $L$-segments
$r^{(0)}_{m,p}$ and (see Figure~\ref{fig:f0})
\begin{equation*}
f_0(0)+\frac{1}{2^m}=\frac{1}{2^m}\le f_0(r^{(0)}_{m,1})<f_0(r^{(0)}_{m,2})<\cdots<f_0(r^{(0)}_{m,2^{m-1}}),
\end{equation*}
hence analogously for each $n$, for $f_n$ on $(a_n,b_n)$ and for each fixed $m$ we have
\begin{equation}\label{e:1}
f_n(a_n)+\frac{h_n}{2^m}\le f_n(r^{(n)}_{m,1})<f_n(r^{(n)}_{m,2})<\cdots<f_n(r^{(n)}_{m,2^{m-1}}),
\end{equation}
where $r^{(n)}_{m,p}$ are $n$th $L$-segments corresponding to the flat segments on the left side of $\Delta_n$.

Since $f(a_n)=f_n(a_n)$ and $f(a_{n+1})=f_n(a_{n+1})=f_n(s_{m_{n+1},p_{n+1}})\subset f_n(r^{(n)}_{m_{n+1},p_{n+1}})$, choosing $m = m_{n+1}$  in (\ref{e:1}) and combining with
 (\ref{e:21}) yields
\begin{equation}\label{e:hi}
m_{n+1}\ge \kappa(n).
\end{equation}
This enables us to estimate the length $\ell_{n+1}$. By our
construction, for each $m=2j$ or $m=2j+1$,
the leftmost segment $r^{(0)}_{m,1}$
of the $0$th category satisfies
\begin{align}\label{a:1}&\lambda(r^{(0)}_{2j+1,1})\le
\frac{1}{2^{2+j^2 + j +\sum_{i=1}^{j}k(2i)}}\le \frac{1}{2^{2j+1}},
\quad j \ge 0,\\
&\label{a:2}\lambda(r^{(0)}_{2j,1})\le
\frac{1}{2^{1+j^2+j+\sum_{i=1}^{j-1}k(2i)}}\le \frac{1}{2^{2j}}, \quad
j \ge 1.
\end{align}
Moreover, all segments from $\mathcal L_0$ placed to the left of
$r^{(0)}_{m,1}$ are shorter than $r^{(0)}_{m,1}$.
Since $\Delta_n$ and $\Delta_{n+1}$ are affinely rescaled version of
the basic  $\sigma$-step triangle with large $\sigma$, we
conclude from \eqref{e:hi},  (\ref{a:1}) and (\ref{a:2})  that
\begin{equation}\label{e:22}
\lambda(s_{m_{n+1},p_{n+1}})=\ell_{n+1}\le
\lambda(r^{(n)}_{\kappa(n),1})\le \frac{\ell_n}{2^{\kappa(n)}}.
\end{equation}
With the help of (\ref{e:24}) we can write
\begin{align}\label{a:3}
R(f,x,a_n)&\ge\frac{f(a_{n+1})-\alpha_{n+1}h_{n+1}-f(a_n)}{\frac{a_{n+1}+b_{n+1}}{2}-a_n}=\frac{\alpha_{n}h_{n}}{\frac{b_{n+1}-a_{n+1}}{2}+a_{n+1}-a_n}=\\
\nonumber
&=\frac{\alpha_{n}h_{n}}{\frac{\ell_{n+1}}{2}+a_{n+1}-a_n}=\frac{h_n}{\ell_n}\cdot\frac{\ell_n}{\ell_{n+1}}\cdot\frac{\alpha_n}{\frac{1}{2}+\frac{a_{n+1}-a_n}{\ell_{n+1}}}.
\end{align}

{\bf III$^{\textbf{b}}_2$.} If $\frac{a_{n+1}-a_n}{\ell_{n+1}}$ is
bounded, since $a_{n+1} > a_n$ we conclude that
$\frac{1}{\frac{1}{2}+\frac{a_{n+1}-a_n}{\ell_{n+1}}}$ is positive and
bounded by a constant $C$.
Thus using \eqref{e:22} and \eqref{a:3}, ii) and the definition of
$\kappa(n)$ yields
$$
R(f,x,a_n)\ge \frac{h_n}{\ell_n} 2^{\kappa(n)} {\alpha_n}{C} \ge C
\frac{h_n}{4\ell_n} \to \infty.
$$
We conclude that $D^-f(x) = + \infty$ and finishes the proof of the
fact
that $f$ is Besicovitch-Morse when this ratio is bounded.

{\bf III$^{\textbf{u}}_2$.} Finally assume that the ratio
$\frac{a_{n+1}-a_n}{\ell_{n+1}}$ is not bounded.
If the liminf of these ratios  over $n$ even is finite, we can use the
corresponding subsequence
and conclude in the same way.
Thus we can assume that the limit over even $n$ is infinite.
Then
\begin{align*}
R(f,x,a_n)  &= \frac{f(x) - f(a_{n+1}) + f(a_{n+1}) - f(a_n)}{x -
a_{n+1} + a_{n+1} - a_n}\\ \nonumber
& = R(f,a_{n+1}, a_n)  \frac{1 + \frac{ f(x) - f(a_{n+1})}{f(a_{n+1})
- f(a_n)}}{1+ \frac{x - a_{n+1}}{a_{n+1} - a_n}}.
\end{align*}

But $\ell_{n+1}/2 \ge x  - a_{n+1} >0$, so
$$
R(f,x,a_n)  \ge R(f,a_{n+1}, a_n)  \frac{1 + \frac{ f(x) -
f(a_{n+1})}{f(a_{n+1}) - f(a_n)}}{1+ \frac{\ell_{n+1}}{2(a_{n+1} -
a_n)}}.
$$

From the definiton of $\alpha_n$ we have
$$0 <
\frac{  f(a_{n+1}) - f(x) }{f(a_{n+1}) - f(a_n)} \le
\frac{  f(a_{n+1}) - f(x) }{h_{n+1}} = \alpha_{n+1},$$  by our
assumption on the unboundedness of the ratios we have
$$\frac{\ell_{n+1}}{2(a_{n+1} - a_n)} \to 0,$$
and by our construction
$$R(f,a_{n+1}, a_n) \ge \frac{2h_n}{\ell_n}.$$
Combining the last four equations we conclude
$D^-f(x) = \infty,$
and our proof that $f$ is Besicovitch-Morse is finished.

In order to finish our proof let us show that the function $f$
preserves the Lebesgue measure. To this end let us define a new
sequence $(g_n)_{n\ge 0}$ of functions from $C(\lambda)$ for which
$\lim_{n\to\infty}g_n=f$.

We define $g_0$ as the full tent map, i.e., the function
$g_0(x)=1-\vert 1-2x\vert$, $x\in I$.
To define the function $g_n$ we put
$$
g_n:=f_n\text{ on }I\setminus\bigcup\mathcal L^*_{n-1},
$$
where $\mathcal L^*_{n-1}$ denotes the set of all $(n-1)$st
$L$-segments and their counterparts in $[1/2,1]$. On each element of
$\mathcal L^*_{n-1}$ instead of rescaled step triangle we use a
rescaled tent map of the same base, height and orientation. Then
$$\lim_{n}g_n=\lim_nf_n=f,
$$
so it is sufficient to show that each $g_n\in C(\lambda)$.
It is true for $g_0$. Let $g_1^{(0)} := g_0$ and using the
lexicographical order on $(m,p)$ (first $m$, then $p$) we
consider the $j$th-interval $r_{m,p}$ and its counterpart in $[1/2,1]$ to modify
$g_1^{(j-1)}$ to a map $g_1^{(j)}$ as in the sequence of pictures.
Property \eqref{e:new} implies that each of these modifications is in
$C(\lambda)$, then Proposition \ref{p:1} implies $g_{1} := \lim_{j\to\infty}g_1^{(j)}\in C(\lambda)$. In order to verify that $g_n\in C(\lambda)$ we put $g_n^{(0)}=g_{n-1}$ and define the sequence $g_{n}^{(j)}$, $j\ge 1$, in an analogous way to (Figure \ref{fig:serge''}) on each element of $\mathcal L^*_{n-1}$.
\end{proof}

\begin{figure}[t]
\begin{minipage}[ht]{0.33\linewidth}
\centering
\begin{tikzpicture}[scale=3]
\draw (0,0) -- (1,0) -- (0.5,1) -- (0,0);
\draw[opacity=0.3] (0,0) -- (0.5,1);
\draw[opacity=0.5]  (89/192,23/24) -- (23/48,23/24);
\draw[opacity=0.5]  (7/24,11/12) -- (11/24,11/12);
\draw[opacity=0.5] (49/192,17/24) -- (13/48,17/24);
\draw[opacity=0.5]  (1/8,1/2) -- (1/4,1/2);
\draw[opacity=0.5] (65/576,17/36) -- (17/144,17/36);
\draw[opacity=0.5] (1/72,4/9) -- (1/9,4/9);
\draw[opacity=0.5] (1/576,4/18) -- (1/144,4/18);
\draw [opacity=0.5] (0,0) -- (1/576,4/18);
\draw [opacity=0.5] (1/144,4/18) -- (1/72,4/9);
\draw [opacity=0.5] (1/9,4/9) -- (65/576,17/36);
\draw [opacity=0.5] (17/144,17/36) --  (1/8,1/2) ;
\draw [opacity=0.5] (1/4,1/2) -- (49/192,17/24);
\draw [opacity=0.5] (13/48,17/24) -- (7/24,11/12);
\draw [opacity=0.5](11/24,11/12) -- (89/192,23/24);
\draw [opacity=0.5] (23/48,23/24) -- (.5,1);
\draw[opacity=0.3] (1-0,0) -- (1-0.5,1);
\draw[opacity=0.5]  (1-89/192,23/24) -- (1-23/48,23/24);
\draw[opacity=0.5]  (1-7/24,11/12) -- (1-11/24,11/12);
\draw[opacity=0.5] (1-49/192,17/24) -- (1-13/48,17/24);
\draw[opacity=0.5]  (1-1/8,1/2) -- (1-1/4,1/2);
\draw[opacity=0.5] (1-65/576,17/36) -- (1-17/144,17/36);
\draw[opacity=0.5] (1-1/72,4/9) -- (1-1/9,4/9);
\draw[opacity=0.5] (1-1/576,4/18) -- (1-1/144,4/18);
\draw [opacity=0.5] (1-0,0) -- (1-1/576,4/18);
\draw [opacity=0.5] (1-1/144,4/18) -- (1-1/72,4/9);
\draw [opacity=0.5] (1-1/9,4/9) -- (1-65/576,17/36);
\draw[opacity=0.5] (1-17/144,17/36) --  (1-1/8,1/2) ;
\draw [opacity=0.5] (1-1/4,1/2) -- (1-49/192,17/24);
\draw [opacity=0.5] (1-13/48,17/24) -- (1-7/24,11/12);
\draw [opacity=0.5] (1-11/24,11/12) -- (1-89/192,23/24);
\draw [opacity=0.5] (1-23/48,23/24) -- (1-.5,1);
\node at (0,-0.05) {\tiny $0$};
\node at (0.5,-0.05) {\tiny $1/2$};
\node at (1,-0.05) {\tiny $1$};
\end{tikzpicture}
\end{minipage}\nolinebreak
\begin{minipage}[ht]{0.33\linewidth}
\centering
\begin{tikzpicture}[scale=3]
\draw (0,0) -- (1,0);
\draw (3/4,1/2) --  (0.5,1) -- (1/4,1/2);
\draw[]  (0,0) -- (1/8,.495) -- (3/16,0.005) -- (1/4,1/2);
\draw[]  (1-0,0) -- (1-1/8,.495) -- (1-3/16,0.005) -- (1-1/4,1/2);
\draw[opacity=0.2]  (1/8,1/2) -- (1/4,1/2);
\draw[opacity=0.2]  (1-1/8,1/2) -- (1-1/4,1/2);
\node at (0,-0.05) {\tiny $0$};
\node at (0.5,-0.05) {\tiny $1/2$};
\node at (1,-0.05) {\tiny $1$};
\end{tikzpicture}
\end{minipage}\nolinebreak
\begin{minipage}[ht]{0.33\linewidth}
\centering
\begin{tikzpicture}[scale=3]
\draw (0,0) -- (1,0);
\draw (3/4,1/2) --  (0.5,1) -- (1/4,1/2);
\draw[]  (1/9,4/9) -- (1/8,1/2) -- (3/16,0) -- (1/4,1/2);
\draw[]  (1-1/9,4/9) -- (1-1/8,1/2) -- (1-3/16,0) -- (1-1/4,1/2);
\draw[] (0,0) --(1/72,4/9)-- (4/72,0) -- (7/72,4/9) -- (15/144,0)  --
(1/9,4/9);
\draw[] (1-0,0) --(1-1/72,4/9)-- (1-4/72,0) -- (1-7/72,4/9) --
(1-15/144,0)  -- (1-1/9,4/9);

\draw[opacity=0.2] (1/72,4/9) -- (1/9,4/9);
\draw[opacity=0.2] (1-1/72,4/9) -- (1-1/9,4/9);
\node at (0,-0.05) {\tiny $0$};
\node at (0.5,-0.05) {\tiny $1/2$};
\node at (1,-0.05) {\tiny $1$};
\end{tikzpicture}
\end{minipage}

\caption{i) $g_1^{(0)}$ and $f_{0}$, ii) $g_1^{(1)}$ (the segment
$r_{1,1}$ is drawn for comparison), iii) $g_1^{(2)}$  (the segment
$r_{2,1}$ is drawn for comparison)}\label{fig:serge''}
\end{figure}
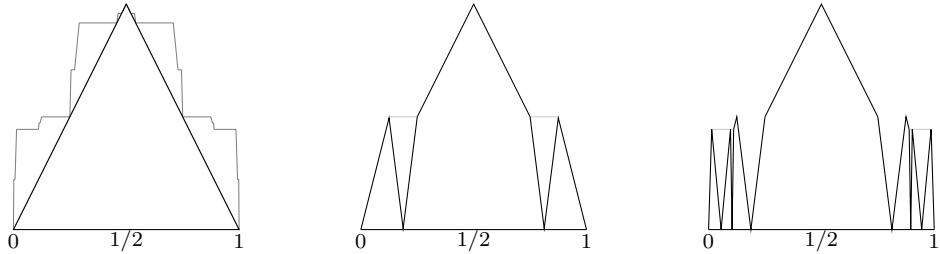


\begin{thebibliography}{99}
\bibitem{Be25} A.S.\  Besicovitch, \textit{Discussion der stetigen
    Funktionen im Zusammenhang mit der Frage über ihre
    Differentierbarkeit} , Bulletin de l'Académie des Sciences de
    Russie, vol. 19 (1925), pp. 527--540.

\bibitem{Bo91}J. Bobok, \textit{On non-differentiable
    measure-preserving functions}, Real Analysis Exchange
    \textbf{16}(1)(1991), 119-129.

\bibitem{JP15} M.\ Jarnicki, P.\ Pflug, \textit{ Continuous Nowhere
    Differentiable Functions (The Monsters of Analysis)}, Springer
    Monographs in Mathematics,  Springer, 2015.

\bibitem{Mo38} A.P.\ Morse,  \textit{A continuous function with no
    unilateral derivatives},
Trans.\ Amer.\ Math.\ Soc. 44 (1938), no. 3, 496--507.

\bibitem{Pe28}E.D.\ Pepper, \textit{On continuous functions without a
    derivative}, Fundamenta Mathematicae \textbf{12}(1928), 244-253.

\bibitem{Sa32} S.\ Saks,  \textit{On the functions of Besicovitch in
    the space of continuous functions},
Fundamenta Mathematicae  19 (1932), 211--219.

\bibitem{Sa37}S. Saks \textit{Theory of the Integral},  2nd revised
    edition, Monografie Mathematyczne, Hafner Publishing Company,
    1937.
\end{thebibliography}
\end{document}